\documentclass[11pt, reqno, psamsfonts]{amsart}
\pdfoutput=1

\usepackage{amssymb}
\usepackage{amsthm}
\usepackage{amsmath}
\usepackage{latexsym}
\usepackage[T1]{fontenc}
\usepackage[utf8]{inputenc}
\usepackage[russian, french, english]{babel}

\usepackage{graphicx}
\usepackage{wrapfig}
\usepackage[justification=centering, labelfont=bf]{caption}
\usepackage{mathtools}
\usepackage{amsbsy}
\usepackage[inline]{enumitem}
\usepackage{mathrsfs}
\usepackage{array}
\usepackage{multicol}
\usepackage{stmaryrd}
\usepackage{cancel}
\usepackage{lmodern}
\usepackage{mathabx}
\usepackage{upgreek}
\usepackage{titlesec}
\usepackage{titletoc}
\usepackage[spacing=true,kerning=true,babel=true,tracking=true]{microtype}
\usepackage[shortcuts]{extdash}
\usepackage[foot]{amsaddr}
\usepackage[left=1in,right=1in,top=1in,bottom=1in,bindingoffset=0cm]{geometry}
\usepackage{bm}
\usepackage{centernot}
\usepackage{mdframed}
\usepackage[hidelinks]{hyperref}
\usepackage{xspace}
\usepackage[skins]{tcolorbox}
\usepackage{framed}
\usepackage[justification=centering, labelfont=bf]{caption}
\usepackage{tikz}
\usetikzlibrary{shapes,snakes}
\usetikzlibrary{arrows.meta}
\usetikzlibrary{decorations.pathmorphing}
\usetikzlibrary{patterns}
\usepackage{float}

\usepackage[
    sortcites,
    backend=biber, style=alphabetic, sorting=nyt, maxnames=100,backref=true]{biblatex}
\title{\sffamily Strong marker sets for arbitrary generating sets of $\Z^n$}%
\date{}

\author{Jing~Yu}
\address{\normalfont Shanghai Center for Mathematical Sciences, Fudan University, Shanghai, China. Email: \texttt{jyu@fudan.edu.cn}.}
\thanks{JY's research is partially supported by the National Natural Science Foundation of China grants 12371343 and 12525110 (PI: Hehui Wu).}

\newtheoremstyle{bfnote}%
{}{}%
{\slshape}{}%
{\bfseries}{\bfseries.}%
{ }%
{\thmname{#1}\thmnumber{ #2}\thmnote{ \ep{\normalfont{}#3}}}

\theoremstyle{bfnote}
\newtheorem{theo}{Theorem}[section]
\newtheorem*{theo*}{Theorem}

\newtheorem{lemma}[theo]{Lemma}

\newtheorem*{corl*}{Corollary}

\theoremstyle{definition}

\newtheorem*{defn*}{Definition}

\newtheorem{remk}[theo]{Remark}

\newtheorem*{exmp*}{Example}

\theoremstyle{remark}
\newtheorem*{ques*}{Question}
\newtheorem*{remk*}{Remark}

\newcommand*{\myproofname}{Proof}

\makeatletter
\newcommand{\neutralize}[1]{\expandafter\let\csname c@#1\endcsname\count@}
\makeatother

\newcommand{\Z}{\mathbb{Z}}

\renewcommand{\epsilon}{\varepsilon}

\renewcommand{\phi}{\varphi}
\renewcommand{\theta}{\vartheta}
\renewcommand{\leq}{\leqslant}

\newcommand{\bemph}[1]{{\normalfont#1}} 
\newcommand{\ep}[1]{\bemph{(}#1\bemph{)}} 

\newcommand{\emphd}[1]{{\fontseries{b}\selectfont\textsf{#1}}}

\newcommand{\Free}{\mathsf{Free}}

\newcommand{\SL}{\mathrm{SL}}

\numberwithin{equation}{section}

\titleformat{\section}[block]{\large\bfseries\sffamily}{\thesection.}{1ex}{}
\titleformat{\subsection}[block]{\bfseries\sffamily}{\thesubsection.}{1ex}{}
\titleformat{\subsubsection}[block]{\itshape}{\bfseries\upshape\sffamily\thesubsubsection.}{1ex}{}

\titlespacing*{\section}{0pt}{*3}{*1}
\titlespacing*{\subsection}{0pt}{*3}{*1}
\titlespacing*{\subsubsection}{0pt}{*2}{*1}

\titlecontents{section}
 [1.5em] %
 {\smallskip}
 {\bfseries\thecontentslabel\hspace{1.02em}}
{\bfseries}
 {\,\,\titlerule*[0.77pc]{}\bfseries\contentspage}
\titlecontents{subsection}
 [4em] %
 {\smallskip}
 {\thecontentslabel\hspace{1.02em}}
{\hspace*{2.32em}}
 {\,\,\titlerule*[0.77pc]{.}\contentspage}

\renewbibmacro{in:}{}

\renewbibmacro*{volume+number+eid}{%
	\printfield{volume}%
	\setunit*{\addnbspace}
	\printfield{number}%
	\setunit{\addcomma\space}%
	\printfield{eid}}

\DeclareFieldFormat[article]{volume}{\textbf{#1}\space}
\DeclareFieldFormat[article]{number}{\mkbibparens{#1}}

\DeclareFieldFormat{journaltitle}{#1,}
\DeclareFieldFormat[thesis]{title}{\mkbibemph{#1}\addperiod}
\DeclareFieldFormat[article, unpublished, thesis]{title}{\mkbibemph{#1},}
\DeclareFieldFormat[book]{title}{\mkbibemph{#1}\addperiod}
\DeclareFieldFormat[unpublished]{howpublished}{#1, }

\DeclareFieldFormat{pages}{#1}

\DeclareFieldFormat[article]{series}{Ser.~#1\addcomma}

\setlist{topsep=3pt,itemsep=3pt}

\begin{document}


    \maketitle
    
    \vspace*{-5pt}
    
    \begin{abstract}
       Gao and Wang proved a strong clopen marker theorem for finite generating sets of $\Z^n$ under the assumption that each generator has support of size either $1$ or $n$. We show that this support assumption can be removed. The proof is a short conjugacy argument: after a unimodular change of coordinates, any finite set of nonzero lattice vectors can be put in full-support position, allowing one to apply the theorem of Gao and Wang and conjugate the resulting marker set back.
    \end{abstract}

\section{Introduction}
Let $\Z^n$ act on $2^{\Z^n}$ by the \emphd{Bernoulli shift}
\[
 (g\cdot x)(h)=x(g+h),\qquad g,h \in \Z^n.
\]
The \emphd{free part} of $2^{\Z^n}$, denoted by $\Free(2^{\Z^n})$, is the set of all $x \in 2^{\Z^n}$ whose stabilizer is trivial.

On each free orbit, we identify the orbit with $\Z^n$. 
Thus, if $x, y \in \Free(2^{\Z^n})$ lie in the same orbit and let $y=g\cdot x$ for the unique $g \in \Z^n$, we define \[\rho(x,y):=\|g\|_\infty:=\max_{1\le i\le n}|g_i|.\]

Gao and Wang proved the following strong clopen marker theorem for certain generating sets:

 \begin{theo}[{\cite[Theorem 1.3]{GW}}]\label{thm:GW} 
    Let $n, d \ge 1$ be positive integers and let $S\subseteq \Z^n$ be a finite generating set. Suppose for each $g\in S$, the set $\{1\le i\le n\colon g_i\ne 0\}$ has size either $1$ or $n$. Then there are a positive integer $\Delta\ge d$ and a clopen subset $M \subseteq \Free(2^{\Z^n})$ such that:
    \begin{enumerate}
    \item if $x,y\in M$ are distinct and lie in the same orbit, then $\rho(x,y)\ge d$;
    \item for any $g\in S$ and any $x\in \Free(2^{\Z^n})$, there are non-negative integers $a, b\leq \Delta$ such that $ag\cdot x\in M$ and $-bg\cdot x\in M$.
    \end{enumerate}
\end{theo}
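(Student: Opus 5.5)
The plan is to construct $M$ from a \emph{periodic} model set that already has both properties. I then transfer it equivariantly to $\Free(2^{\Z^n})$ with a clopen tiling. The model set is a lattice of the form
\[
\Lambda_{w,p}=\{x\in\Z^n\colon w\cdot x\equiv 0 \pmod p\},
\]
where $w\in\Z^n$ and $p$ is a large prime. The model part needs no support assumption.

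\textbf{Step 1: choose $w$ and $p$.} I will pick $w$ with $w\cdot g\ne 0$ for every $g\in S$; a generic $w$, for instance $w=(1,N,N^2,\dots,N^{n-1})$ with $N$ large, works. I then choose $p$ prime with
\[
p>\max_{g\in S}|w\cdot g| \quad\text{and}\quad p>d\,\|w\|_1 .
\]
With these choices, on every line $x+\Z g$ the points of $\Lambda_{w,p}$ recur with period exactly $p$, because $w\cdot g$ is invertible mod $p$. Separation also holds: if $0<\|u\|_\infty<d$, then $0<|w\cdot u|<p$ (using that $w\cdot u\ne0$ for small nonzero $u$ by genericity), so $u\notin\Lambda_{w,p}$. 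Thus, inside a single orbit with a fixed origin, $\Lambda_{w,p}$ satisfies (1), and it satisfies (2) with $\Delta=p$.

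\textbf{Step 2: transfer to the free part.} I will use the Gao--Jackson clopen tiling theorem. It gives a clopen, equivariant partition of each free orbit into rectangles with side lengths in $\{L,L+1\}$, for any prescribed large $L\gg p$, together with a clopen choice of a corner of each tile as its local origin. Inside each tile I place the translate of $\Lambda_{w,p}$ anchored at that corner. I then delete the points within $\ell_\infty$-distance $d$ of the tile boundary. This keeps $M$ clopen, because membership is decided by a bounded neighborhood, and it keeps condition (1) across different tiles.

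\textbf{Step 3 (main obstacle): lines near tile faces.} Condition (2) must still be checked for lines that travel inside the deleted boundary layers. For a full-support generator $g$ this is harmless. Every coordinate changes along $x+\Z g$, so the line leaves any slab of width $2d$ around a face within $O(d)$ steps. It also crosses tiles, whose sides are at least $L$, with chords of controlled length. Hence the line re-enters a tile interior and meets a marker within $O(p+d)$ steps. For an axis generator $e_i$, however, the line may run parallel to a face of width-$2d$ layer indefinitely. This is exactly where the hypothesis on supports is used.

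I would handle this by induction on dimension. On each codimension-one face layer (of width $2d$) perpendicular to a coordinate $j\ne i$, I reinstate markers from a lower-dimensional lattice pattern $\Lambda_{w',p'}$ living in that face. This pattern is chosen $d$-separated, and its parameters are chosen compatible with the adjacent tiles' patterns after thinning a further margin. Its lines in the remaining axis directions are handled as in Step 1. Corners and edges, where several faces meet, are treated recursively in the same way.

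The delicate point is to make these face patterns simultaneously $d$-separated from the neighbouring interior patterns and hit by every axis line inside the layer. I expect this to be resolved by taking the tile sizes with enough gap between scales, $L\gg p'\gg p$, and using nested margins. Because only axis and full-support directions occur, no generator is ever trapped in a layer of intermediate dimension. That is what makes the recursion close, and it yields $\Delta$ of order $p'+d\ge d$.
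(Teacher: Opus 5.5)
The paper does not prove this statement. It is quoted from Gao--Wang and used as a black box, and in the proof of Theorem~\ref{thm:main} it is applied only to $AS$, all of whose vectors have full support.

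Your proposal has a genuine gap at Step~3. That step carries the whole difficulty for axis generators, and there you only record an expectation. Consider two patterns anchored at different tiles, or at different pieces of a face (a face abuts up to $2^{n-1}$ neighbours at arbitrary offsets), that share the period $p'$ along $e_i$. They have a \emph{constant} offset along every $e_i$-line running beside their common boundary. If that offset is $<d$, the thinning needed for (1) removes every marker of one of the two patterns on that line. No global choice of $w',p'$ or of scales $L\gg p'\gg p$ avoids this, because the relative position of neighbouring tiles is arbitrary. What is missing is an explicit coordination rule, for instance priorities from a clopen colouring of the tile graph, with lower-priority tiles placing markers in the gaps left by higher-priority ones. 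That rule would then have to be repeated at every lower-dimensional junction, and in a non-grid box tiling these are T-junctions with arbitrary offsets. Moreover, $e_i$-lines are trapped near edges of every dimension parallel to $e_i$, so nothing in the proposal shows that the recursion closes.

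Your full-support step reaches a true conclusion for an insufficient reason: leaving one slab within $O(d)$ steps does not stop the line from entering the next one. The right argument is a corner count.
\begin{itemize}
\item If $W\|g\|_\infty\le L$, the bounding box of $\{tg:0\le t\le W\}$ has sides at most $L+1$. Every tile meeting this box contains one of its $2^n$ corners, so at most $2^n$ tiles meet it.
\item Each set $\{t:tg\in T\}$ is an interval, so the window splits into at most $2^n$ chords, and one of them has length at least $(W+1)/2^n$.
\item Every coordinate moves strictly monotonically along the line. Hence at most $2n(d+1)$ of that chord's times lie in the deleted boundary layer, and they sit at its two ends.
\item Taking $W=2^n(p+2n(d+1))$ and $L\ge W\max_{g\in S}\|g\|_\infty$ gives (2) with $\Delta=W\ge d$.
\end{itemize}

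With this fix you can drop Step~3 altogether. Steps~1--2 then prove the statement for every finite set of full-support vectors, generating or not. Lemma~\ref{lem}, followed by conjugation by $\Phi_A$ with separation parameter $d\|A\|_\infty$ exactly as in the proof of Theorem~\ref{thm:main}, transfers this to an arbitrary finite $S\subseteq\Z^n\setminus\{0\}$, including sets containing axis vectors.
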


The purpose of this note is to remove the support assumption on the generators. The condition $0\notin S$ in the statement below is natural: if $0\in S$, the second condition for $g=0$ would force $x\in M$ for every $x$, which is incompatible with the intended separation condition when $d>1$.

\begin{theo}\label{thm:main}
    Let $n,d$ be positive integers and let $S\subseteq\Z^n\setminus\{0\}$ be a finite generating set.  Then there are a positive integer $\Delta\ge d$ and a clopen set
    $M\subseteq \Free(2^{\Z^n})$
such that:
\begin{enumerate}
 \item if $x,y\in M$ are distinct and lie in the same orbit, then $\rho(x,y)\ge d$;
 \item for every $g\in S$ and every $x\in \Free(2^{\Z^n})$, there are non-negative integers $a,b\le \Delta$ such that
$ag\cdot x\in M$ and $-bg\cdot x\in M$.
\end{enumerate}
\end{theo}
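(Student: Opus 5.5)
The plan is to follow the strategy announced in the abstract: find $A\in \SL_n(\Z)$, or more generally $A\in \mathrm{GL}_n(\Z)$, such that every vector of $A(S)$ has all coordinates nonzero. Then apply Theorem~\ref{thm:GW} to the generating set $A(S)$, and transport the resulting marker set back along the conjugacy induced by $A$. The three steps are: (i) construct $A$; (ii) build a homeomorphism of $\Free(2^{\Z^n})$ intertwining the two actions; (iii) check that conditions (1) and (2) survive, at the cost of rescaling $d$ and $\Delta$ by constants depending only on $A$.

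For step (i), note first that $A(S)$ generates $\Z^n$, since $A$ is an automorphism of $\Z^n$. I want the set $\{A\in\SL_n(\Z): \langle e_i, Ag\rangle\ne0 \text{ for all } i \text{ and all } g\in S\}$ to be nonempty. For each $i$ and $g$, the condition $\langle e_i,Ag\rangle=0$ says that the $i$-th row $r_i$ of $A$ is orthogonal to $g$.

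So it suffices to find a basis $r_1,\dots,r_n$ of $\Z^n$ whose members avoid the finitely many hyperplanes $g^\perp$, $g\in S$, each of which is proper because $g\neq0$. Concretely, I would take $A$ lower unitriangular with rows $r_i=e_i+\sum_{j<i} t_{ij}e_j$, or a product of elementary matrices. Then $\langle r_i,g\rangle$ is a nonzero polynomial in the parameters $t$: for fixed $i$, choose the triangular shape so that the row can reach every direction. A cleaner choice is $r_i = e_i + N\sum_{j\ne i}\lambda^{j} e_j$, but then unimodularity must be checked separately.

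The more robust route is this. Each function $A\mapsto\langle e_i,Ag\rangle$ is a nonvanishing polynomial on $\SL_n(\Z)$, because $\SL_n(\Z)$ acts transitively on primitive vectors, so some $A$ sends $g$ to a vector with nonzero $i$-th coordinate. $\SL_n(\Z)$ is Zariski dense in $\SL_n(\mathbb{R})$, which is irreducible. A finite union of proper Zariski-closed subsets therefore cannot contain $\SL_n(\Z)$. This is the step I expect to need the most care; the Zariski density argument is the quickest, and an explicit product of elementary matrices $\prod(I+kE_{ij})$ with rapidly growing $k$ is the elementary fallback.

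For step (ii), define $\Phi:2^{\Z^n}\to2^{\Z^n}$ by $\Phi(x)(h)=x(A^{-1}h)$. This is a homeomorphism with $\Phi(g\cdot x)=(Ag)\cdot\Phi(x)$. It therefore maps $\Free$ onto $\Free$ and orbits onto orbits.

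Step (iii) goes as follows. Let $C=\max(\|A\|_{\infty\to\infty},\|A^{-1}\|_{\infty\to\infty})$. Apply Theorem~\ref{thm:GW} to $A(S)$ with separation $d'=Cd$, obtaining $\Delta'$ and a clopen set $M'$. Set $M=\Phi^{-1}(M')$, which is clopen.

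For (1), take distinct $x,y\in M$ with $y=g\cdot x$. Then $\Phi(y)=(Ag)\cdot\Phi(x)$, so $\|Ag\|_\infty\ge Cd$, and hence $\|g\|_\infty\ge \|Ag\|_\infty/C\ge d$.

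For (2), take $g\in S$ and $x$. Theorem~\ref{thm:GW} applied to $Ag\in A(S)$ and the point $\Phi(x)$ gives $a,b\le\Delta'$ with $a(Ag)\cdot\Phi(x)\in M'$, that is, $\Phi(ag\cdot x)\in M'$, so $ag\cdot x\in M$. The same argument handles $b$. Finally take $\Delta=\max(\Delta',d)$.
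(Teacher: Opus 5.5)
Your proposal is correct. Steps (ii) and (iii) coincide with the paper's argument: the same conjugating map $\Phi(x)(h)=x(A^{-1}h)$, the same equivariance $\Phi(g\cdot x)=(Ag)\cdot\Phi(x)$, and the same application of Theorem~\ref{thm:GW} with an enlarged separation parameter. The paper uses $d\|A\|_\infty$. Including $\|A^{-1}\|_\infty$ in your $C$ is harmless but unnecessary, since only $\|Ah\|_\infty\le\|A\|_\infty\|h\|_\infty$ is used.

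The real difference is step (i). You get $A$ by a genericity argument. Each condition $\langle e_i,Ag\rangle=0$ cuts out a Zariski-closed subset of $\SL_n(\mathbb{R})$, and it is proper because some integer matrix violates it (for non-primitive $g$, apply transitivity to $g$ divided by the gcd of its entries; $n=1$ is trivial). Since $\SL_n(\Z)$ is Zariski dense in the irreducible $\SL_n(\mathbb{R})$, finitely many such sets cannot cover $\SL_n(\Z)$. This is valid, but it imports two nontrivial facts: Zariski density of $\SL_n(\Z)$ (via the elementary unipotent subgroups, or Borel density) and irreducibility of $\SL_n$. It also only gives existence.

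The paper proves Lemma~\ref{lem} by hand. With $C=\max_{v\in F}\|v\|_\infty$, $t>C+1$ and $q>C$, the vector $w=(1,t,\dots,t^{n-1})$ satisfies $\langle w,v\rangle\ne0$ for every $v\in F$ by a base-$t$ non-cancellation estimate. The matrix with rows $w$ and $e_j+qw$ for $j\ge 2$ lies in $\SL_n(\Z)$, with $(Av)_1=\langle w,v\rangle$ and $(Av)_j=v_j+q\langle w,v\rangle\ne0$.

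What each route buys:
\begin{itemize}
\item The paper's route is fully elementary and gives an explicit $A$ with an explicit bound on $\|A\|_\infty$, hence on the separation parameter fed into Theorem~\ref{thm:GW}. This is what underwrites the paper's remark that the whole proof is constructive.
\item Your route is quicker to state and shows more, namely that a Zariski-generic $A$ works, but it is non-explicit.
\end{itemize}

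Your first concrete suggestion, a lower unitriangular $A$, would fail on its own, since its first row is $e_1$ and $\langle e_1,g\rangle=g_1$ can vanish. The paper's matrix is the product of the lower unitriangular matrix $I+q\sum_{j\ge2}E_{j1}$ with the upper unitriangular matrix with rows $w,e_2,\dots,e_n$. It is therefore a precise instance of the ``product of elementary matrices with rapidly growing entries'' fallback you mention.
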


The proof is based on the elementary observation that a finite set of nonzero lattice vectors can be put in full-support position by a unimodular change of coordinates. The only point requiring care is that the metric $\rho$, which is defined using the standard $\ell^\infty$-norm, is not invariant under a general element of $\SL_n(\Z)$. We compensate for this by applying Theorem~\ref{thm:GW} with a larger separation parameter.

For an integer matrix $A = (a_{ij})_{n \times n}$, write \[\|A\|_{\infty} = \max_{1 \le i\le n}\sum_{j=1}^n|a_{ij}|.\] 
Thus
\[
\|Ah\|_\infty\le\|A\|_\infty\|h\|_\infty
\qquad\text{for all }h\in\Z^n.
\]


\section{A unimodular general-position lemma}

\begin{lemma}\label{lem}
Let $F\subseteq\Z^n\setminus\{0\}$ be finite. Then there is a matrix 
$A\in\SL_n(\Z)$ such that for every $v\in F$ and every $1\le j\le n$, 
we have $(Av)_j\ne 0$.
Moreover, such a matrix $A$ can be constructed explicitly from $F$.
\end{lemma}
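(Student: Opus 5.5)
We look for $A$ of a very simple unimodular form, say a product of elementary (shear) matrices, chosen so that each coordinate of $Av$ is a nonzero linear form in $v$ evaluated at a vector avoiding finitely many ``bad'' values. The cleanest construction is to fix coordinate $1$ first and then shear it into all the others. Concretely, we first find a row vector $c=(c_1,\dots,c_n)\in\Z^n$ with $c_1=1$ such that $c\cdot v\neq 0$ for all $v\in F$. Then we look for $A$ of the form
\[
A=\begin{pmatrix} c \\ E\end{pmatrix},
\]
where the lower rows are $e_j+t_j c$ for $j=2,\dots,n$ and $t_j\in\Z$ are to be chosen. This $A$ is obtained from the matrix with rows $c,e_2,\dots,e_n$ (determinant $c_1=1$) by adding multiples of the first row to the others, so $\det A=1$ and $A\in\SL_n(\Z)$.

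\textbf{Choosing $c$.} We take $c=(1,N,N^2,\dots,N^{n-1})$ with $N$ an integer exceeding $2\max_{v\in F}\|v\|_\infty$. For nonzero $v$, let $k$ be the largest index with $v_k\neq 0$. Then
\[
|c\cdot v|\ge N^{k-1}|v_k|-\sum_{i<k}N^{i-1}|v_i|\ge N^{k-1}-\tfrac{N}{2}\cdot\tfrac{N^{k-1}-1}{N-1}>0,
\]
which is the usual base-$N$ uniqueness argument, so $(Av)_1=c\cdot v\neq0$. (An alternative is to note that $\{c\colon c\cdot v=0\}$ is a hyperplane for each $v$, and a finite union of hyperplanes cannot contain the moment curve's integer points; the explicit choice above is preferable since the lemma asks for an explicit construction.)

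\textbf{Choosing $t_j$.} For $j\ge2$ we have $(Av)_j=v_j+t_j(c\cdot v)$. Since $c\cdot v\neq0$, this vanishes for at most one value $t_j=-v_j/(c\cdot v)$ per $v\in F$. Hence it suffices to take $t_j$ outside a finite set, and explicitly $t_j=T:=1+\max_{v\in F}\|v\|_\infty$ works: $|T(c\cdot v)|\ge T>|v_j|$.

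The only real point is the first step, producing a single integral linear form with leading coefficient $1$ (to keep unimodularity) that is nonvanishing on $F$. The base-$N$ trick handles it, and everything else is bookkeeping. We would also check that the construction is explicit and depends only on $\max_{v\in F}\|v\|_\infty$, which may be useful later when bounding $\|A\|_\infty$ and $\|A^{-1}\|_\infty$.
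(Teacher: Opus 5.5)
Your proposal is correct and is essentially the paper's proof. It uses the same first row $(1,N,\dots,N^{n-1})$, made nonvanishing on $F$ by the leading-term (base-$N$) argument, and the same sheared rows $e_j+Tc$. The only differences are the constants: $N>2C$ and $T=C+1$, versus the paper's $t>C+1$ and $q>C$. One small omission: when $F=\emptyset$, the quantity $\max_{v\in F}\|v\|_\infty$ is undefined, so that case should be handled separately by taking $A=I_n$, as the paper does.
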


\begin{proof}
If $F=\emptyset$, take $A=I_n$. The case $n=1$ is immediate, so assume 
$n\ge 2$. Let
\[
C=\max\{\|v\|_\infty:v\in F\}.
\]
Choose integers $t>C+1$ and $q>C$, and put
\[
w=(1,t,t^2,\ldots,t^{n-1})\in\Z^n .
\]
We first claim that $\langle w,v\rangle\ne0$ for every $v\in F$. Fix 
$v=(v_1,\ldots,v_n)\in F$, and let $k$ be maximal such that $v_k\ne0$. If 
$k=1$, then $\langle w,v\rangle=v_1\ne0$. If $k>1$, then
\[
\left|\sum_{i<k}v_it^{i-1}\right|
\le C\sum_{i=0}^{k-2}t^i
=
C\frac{t^{k-1}-1}{t-1}
<
t^{k-1}
\le |v_k|t^{k-1}.
\]
Hence the highest nonzero term cannot be cancelled by the lower order terms, so
$\langle w,v\rangle\ne0$.

Now define $A$ by taking its rows to be
\[
r_1=w,\qquad r_j=e_j+qw \quad (2\le j\le n),
\]
where $e_j$ denotes the $j$th standard basis vector.
The matrix with rows $w,e_2,\ldots,e_n$ is upper triangular with diagonal entries
all equal to $1$, and hence has determinant $1$. Passing from this matrix to $A$
amounts to adding $q$ times the first row to each of the rows $2,\ldots,n$, so the
determinant remains $1$. Thus $A\in\SL_n(\Z)$.

Finally, for every $v\in F$,
\[
(Av)_1=\langle w,v\rangle\ne0.
\]
For $2\le j\le n$,
\[
(Av)_j=v_j+q\langle w,v\rangle.
\]
Since $\langle w,v\rangle$ is a nonzero integer and $q>C\ge |v_j|$, this is nonzero. Therefore every coordinate of every $Av$, $v\in F$, is nonzero.
\end{proof}

\section{Proof of Theorem~\ref{thm:main}}
\begin{proof}[Proof of Theorem~\ref{thm:main}]
    Let $S\subseteq\Z^n\setminus\{0\}$ be a finite generating set and fix $d \ge 1$.
    Apply Lemma~\ref{lem} to $F = S$. We get $A\in\SL_n(\Z)$ such that every vector in $AS = \{Ag\,:\, g\in S\}$ has no zero coordinate. Since $A$ is an automorphism of the lattice $\Z^n$, the set $AS$ is again a finite generating set of $\Z^n$. Moreover, every element of $AS$ has support of size $n$, so $AS$ satisfies the hypothesis in Theorem~\ref{thm:GW}. 

    Recall that $\|Ah\|_{\infty} \le \|A\|_{\infty} \|h\|_{\infty}$ for all $h \in \Z^n$. Apply Theoerm~\ref{thm:GW} with $AS$ and $d\|A\|_{\infty}$ in place of $S$ and $d$, 
    we can get some clopen set $M' \subseteq \Free(\Z^n)$ and an integer $\Delta \ge d\|A\|_{\infty}$ such that
    \begin{enumerate}
 \item if $x,y\in M'$ are distinct and lie in the same orbit, then $\rho(x,y)\ge d\|A\|_{\infty}$;
 \item for every $g\in AS$ and every $x\in \Free(2^{\Z^n})$, there are non-negative integers $a,b\le \Delta$ such that
$ag\cdot x\in M'$ and $-bg\cdot x\in M'$.
\end{enumerate}
Define
    \[\Phi_A: \Free(2^{\Z^n}) \to \Free(2^{\Z^n})\] by 
    \[\Phi_A(x)(h) = x(A^{-1}h), \qquad h \in \Z^n.\] 
    Since $A^{-1}\in\SL_n(\Z)$, the map $\Phi_A$ is a coordinate permutation of $2^{\Z^n}$, with inverse $\Phi_{A^{-1}}$.
    For every $g\in\Z^n$ and every $x\in2^{\Z^n}$, we have 
    \[\Phi_A(g \cdot x) = Ag \cdot \Phi_A(x).\]
   Indeed, evaluate both sides at $h \in \Z^n$ gives 
    \[\Phi_A(g \cdot x) = (g \cdot x)(A^{-1}h) = x(g+A^{-1}h),\]
    while
    \[(Ag \cdot \Phi_A(x))(h) = \Phi_A(x)(Ag+h) = x(A^{-1}(Ag+h)) = x(g+A^{-1}h). \]
    It follows in particular that $\Phi_A$ maps $\Free(2^{\Z^n})$ onto itself. Indeed,
if $x\in\Free(2^{\Z^n})$ and $u\cdot\Phi_A(x)=\Phi_A(x)$, then
\[
\Phi_A(A^{-1}u\cdot x)=\Phi_A(x),
\]
so $A^{-1}u\cdot x=x$. Since $x$ is free, $A^{-1}u=0$, and hence $u=0$.

Now define $M = \Phi_A^{-1}(M')$. Since $M'$ is clopen and $\Phi_A$ is a homeomorphism on $\Free(2^{\Z^n})$, $M$ is clopen. 

We verify the separation condition. Suppose $x, y \in M$ are distinct and lie in the same orbit. Write $y = h \cdot x$ with $0 \neq h \in \Z^n$. Then 
\[\Phi_A(y) = Ah\cdot\Phi_A(x).\] 

Since $\Phi_A(x), \Phi_A(y) \in M'$, the separation property of $M'$ gives
\[\|Ah\|_{\infty} \ge d\|A\|_{\infty}.\]
On the other hand,
\[\|Ah\|_{\infty} \le \|A\|_{\infty}\|h\|_{\infty},\]
Since $\|A\|_{\infty} > 0$,  it follows that $\|h\|_{\infty} \ge d$, thus $\rho(x, y) \ge d$.

Finally, fix $g \in S$ and $x \in \Free(2^{\Z^n})$. Since $Ag \in AS$, the second property of $M'$ gives non-negative integers $a, b \le \Delta$ such that 
\[a(Ag)\cdot \Phi_A(x) \in M' \quad\text{and}\quad -b(Ag) \cdot \Phi_A(x) \in M'.\]
Equivalently, 
\[\Phi_A(ag\cdot x) \in M'\quad\text{and}\quad\Phi_A(-bg\cdot x) \in M'.\]
Hence
\[ag\cdot x \in M\quad\text{and}\quad -bg\cdot x \in M.\]
Since $\Delta \ge d\|A\|_{\infty} \ge d$, this completes the proof.
\end{proof}

\begin{remk}
The proof above is constructive in the following sense. The matrix $A$ in Lemma~\ref{lem} is obtained explicitly from the finite set $S$. Thus, together with the constructive proof of Theorem~\ref{thm:GW}, the argument gives a constructive proof of Theorem~\ref{thm:main}.
\end{remk}
\section*{Acknowledgements}
The author thanks Zhaokuan Hao for his encouragement to finish this paper. 

\printbibliography
\end{document}